\newtheorem{theorem}{Theorem}[section]
\newtheorem{lemma}[theorem]{Lemma}
\newtheorem{corollary}[theorem]{Corollary}
\newtheorem*{coro*}{Corollary}
\theoremstyle{definition}}
\theoremstyle{definition}}
\theoremstyle{definition}
\newtheorem{question}[theorem]{Question}
\newtheorem{fact}[theorem]{Fact}
\newtheorem*{fact*}{Fact}
\newtheorem*{claim*}{\rm Claim}
\theoremstyle{definition}}
\def\D{\ensuremath{\mathbb D}}
\def\T{\ensuremath{\mathbb T}}
\def\Z{\ensuremath{\mathbb Z}}
\def\C{\ensuremath{\mathbb C}}
\def\N{\ensuremath{\mathbb N}}
\DeclareMathOperator{\dens}{dens}
\newcommand{\hy}{hypercyclic}
\newcommand{\fhy}{frequently hypercyclic}
\newcommand{\ufhy}{$\mathcal{U}$-frequently hypercyclic}
\newcommand{\ops}{operators}
\newcommand{\op}{operator}
\newcommand{\erg}{ergodic}
\newcommand{\pr}{probability}
\newcommand{\pss}[2]{\ensuremath{{\langle #1,#2\rangle}}}
\newcommand{\ba}[1]{\overline{#1}}
\newcommand{\wh}[1]{\widehat{#1}}
\def\apl#1#2#3{\,#1\!:\!\!\xymatrix@C=17pt{#2\ar[r]&#3}
}
\def\aplba#1#2{\xymatrix@C=20pt{#1\!\ar@{|->}[r]&\!#2}
}
\newcommand{\di}{\underline{\textrm{dens}}}
\newcommand{\ds}{\overline{\textrm{dens}}}
\newcommand{\bx}{\mathcal{B}(X)}
\newcommand{\nt}[2]{\mathcal{N}_{\,T}(#1,#2)}
\renewcommand{\t}{$T$}
\newcommand{\hc}{HC($T$)}
\newcommand{\fhc}{FHC($T$)}
\newcommand{\ufhc}{UFHC($T$)}
\begin{document}

\date{\today}

\title[Irregularly visiting orbits]{Frequently hypercyclic operators with irregularly visiting orbits}

\author{S. Grivaux}
\address{CNRS, Laboratoire Paul Painlev\'e, UMR 8524\\
Universit\'{e} de Lille\\
Cit\'e Scientifique, B\^atiment M2\\
59655 Villeneuve d'Ascq Cedex\\
France}
\email{sophie.grivaux@math.univ-lille1.fr}

\thanks{This work was supported in part by EU IRSES grant AOS (PIRSES-GA-2012-
318910), by the Labex CEMPI (ANR-11-LABX-0007-01), and by the grant ANR-17-CE40-0021 of the French
National Research Agency ANR (project Front).}

\subjclass{47A16}

\keywords{Hypercyclic and frequently hypercyclic operators and vectors, irregularly visiting orbits, Frequent Hypercyclicity Criterion, universal operators.}

\begin{abstract}
We prove that a bounded operator $T$ on a separable Banach space $X$ satisfying a strong form of the Frequent Hypercyclicity Criterion (which implies in particular that the operator is universal in the sense of Glasner and Weiss) admits frequently hypercyclic vectors with \emph{irregularly visiting orbits}, i.e. vectors $x\in X$ such that the set  $\nt{x}{U}=\{n\ge 1\,;\,T^{n}x\in U\}$ of return times of $x$ into $U$ under the action of $T$ has positive lower density for every non-empty open set $U\subseteq X$, but there exists a non-empty open set $U_0\subseteq X$ such that $\nt{x}{U_0}$ has no density.
\end{abstract}

\maketitle

\section{Introduction}\label{Intro}
Let $X$ be a separable infinite-dimensional Banach space, and let $\mathcal{B}(X)$ be the space of bounded linear \ops\ on $X$. We are interested in this paper in the study of dynamics of certain \ops\ $T\in\bx$, and the existence of vectors whose iterates under the action of \t\ have an ``irregular'' behavior. Given $T\in\bx$, $x\in X$, and $U\subseteq X$ a non-empty open set, we denote by $\nt{x}{U}=\{n\ge 1\,;\,T^{n}x\in U\}$ the set of return times of $x$ into $U$ under the action of $T$. The \op\ \t\  is said to be \emph{\hy}\ when there exists a vector $x\in X$ with dense orbit under the action of \t, i.e.\ when $\nt{x}{U}$ is non-empty for every non-empty open set $U\subseteq X$, and \emph{\fhy}\ (resp. \emph{\ufhy}) when there exists $x\in X$ such that $\di\,\nt{x}{U}>0$ (resp. $\ds\,\nt{x}{U}>0$) for every non-empty open set $U\subseteq X$. We denote respectively by $\di\,A$, $\ds\,A$ and $\textrm{dens}\,A$  the lower density, the upper density, and if it exists, the density of a subset $A$ of $\N$:
\begin{align*}
 \di\,A&=\liminf_{N\to+\infty}\dfrac{1}{N}\,\#\bigl ([1,N]\cap A \bigr)\\
 \ds\,A&=\limsup_{N\to+\infty}\dfrac{1}{N}\,\#\bigl ([1,N]\cap A \bigr)\\
 \textrm{dens}\,A&=\lim_{N\to+\infty}\dfrac{1}{N}\,\#\bigl ([1,N]\cap A \bigr).
\end{align*}
Vectors $x$ with one of the properties above are called respectively \hy, \fhy, and \ufhy\ vectors. We denote by \hc, \fhc, and \ufhc\  these three sets of vectors. When \t\ is \hy, \hc\ is a dense $G_{\delta }$ subset of $X$. The set \fhc, although dense in $X$, is meager in $X$ for every \fhy\ \op\ \t\ (\cite{Moo}, \cite{BR}, see also \cite{GM}), while \ufhc\ is comeager in $X$ for every \ufhy\ \op\ \t\ (\cite{BR}).
The notion of frequent hypercyclicity was introduced in the paper \cite{BG1}, while $\mathcal{U}$-frequent hypercyclicity was first considered by Shkarin in \cite{Sh}. These two concepts have been the object of an important amount of work in recent years.
 We refer the reader to the books \cite{GEP} and \cite{BM}, as well as to the papers \cite{GM}, \cite{BMPP}, \cite{BoGE2017}, \cite{Me} and \cite{GMM} (among many others) for an in-depth study of frequent and $\mathcal{U}$-frequent hypercyclicity and related phenomena.
\par\medskip
There are (as of now) essentially two known ways of constructing frequently \hy\ vectors for an \op\ $T\in\bx$: by an explicit construction, or by using ergodic theory. 
In the first approach, explicit \fhy\ vectors are constructed as a series of vectors whose iterates have suitable properties. The most classical construction of his type is the one yielding the so-called Frequent Hypercyclicity Criterion, first proved in \cite{BG1} and then generalized in \cite{BoGE}. Another explicit construction of such vectors, making use of some assumptions concerning the periodic points of the operator, is given in \cite{GMM}.
In particular, operators with the so-called Operator Specification Property, which were shown in \cite{BMP} to be frequently hypercyclic, satisfy this criterion.
 The second, widely used, approach for proving that a given \op\ is \fhy, is to use ergodic theory: one shows that the \op\ is \emph{ergodic} in the sense that it admits an invariant \pr\ measure $m$ with full (topological) support with respect to which it defines an ergodic transformation of the space. An application of Birkhoff's pointwise ergodic theorem then shows that \t\ is \fhy. More precisely, if $T\in\bx$ is \erg\ with respect to a \pr\ measure $m$ on $X$ such that $m(U)>0$ for every non-empty open set $U\subseteq X$, then, for every such $U\subseteq X$, $\textrm{dens}\,\nt{x}{U}=m(U)$ for $m$-almost every $x\in X$. It follows by considering a countable basis $(U_{p})_{p\ge 1}$ of non-empty open subsets of $X$ that $m$-almost every vector $x\in X$ is \fhy\ for \t, and satisfies
$\textrm{dens}\,\nt{x}{U_{p}}=m(U_{p})$ for every $p\ge 1$. Hence the following question, which was posed in a first version, dating from 2013, of the survey paper \cite{GTen}, arises naturally:

\begin{question}\label{Question 1}
Does there exist a \fhy\ \op\ \t\ on a Banach space $X$ which admits a \fhy\ vector $x\in X$ such that for some non-empty open set $U\subseteq X$, the set $\nt{x}{U}$ has no density, i.e.\ $\di\,\nt{x}{U}<\ds\,\nt{x}{U}$? 
\end{question}

A related question is due to Shkarin, who asked in \cite{Sh} whether all frequently \hy\ \op s $T\in\bx$ admit a frequently \hy\ vector $x$ such that, for every  non-empty open set $U\subseteq X$, $\nt{x}{U}$ contains a set of positive density. As mentioned above, all ergodic \op s satisfy this property.
\par\medskip
Observe that Question \ref{Question 1} is very easy to answer if one withdraws the requirement that the vector $x\in X$ be frequently \hy\ for \t. Indeed, if $T\in\bx$ is any \ufhy\ \op, \ufhc\ is comeager in $X$ while \fhc\ is meager. Hence \ufhc\,$\setminus$\,\fhc\ is comeager in $X$, and any vector $x$ belonging to this set has the property that for some non-empty open set $U\subseteq X$, $\di\,\nt{x}{U}=0$ while $\ds\,\nt{x}{U}>0$.
\par\medskip
A first progress concerning Question \ref{Question 1} was made by Y. Puig de Dios in \cite{Puig}. He proved there the following result: for any \fhy\ \op\ \t\ on a Banach space $X$, any \fhy\ vector $x\in X$ and any non-empty open subset $U$ of $X$ with the property that none of the sets $\bigcup_{n=0}^N T^{-n}U$, $N\ge 0$, is  dense in $X$, the set $\nt{x}{U}$ has different lower density and upper Banach density.
\par\medskip
Our aim in this note is to answer Question \ref{Question 1} in the affirmative by showing that \op s $T\in\bx$ satisfying a particular form of the Frequent Hypercyclicity Criterion admit frequently \hy\ vectors $x$ with \emph{irregularly visiting orbits}, i.e.\ such that for some non-empty open set $U\subseteq X$, $\di\,\nt{x}{U}<\ds\,\nt{x}{U}$. Operators of this kind were considered in \cite{G}, where it was shown that they are universal in the sense of Glasner and Weiss \cite{GW}, i.e.\ represent in a certain sense all ergodic \pr\ preserving systems. We say that a vector $x_{0}\in X$ is \emph{bicyclic} for an \op\ $T\in\bx$ if there exist vectors $x_{n}\in X$, $n\in\Z\setminus \{0\}$, such that $Tx_{n}=x_{n+1}$ for every $n\in\Z$ and the linear span of the vectors $x_{n}$, $n\in\Z$, is dense in $X$. When $x_{0}$ is a bicyclic vector for \t, we write such associated vectors $x_{n}$ as $x_{n}=T^{n}x_{0}$, $n\in\Z$, even when $T$ is not invertible.
\par\medskip
Here is the main result of this note:
\begin{theorem}\label{Theorem 1}
 Let \t\ be a bounded \op\ on a separable Banach space $X$ which admits a vector $x_{0}\in X$ with the  following three properties:
 \begin{enumerate}
 \item[\emph{(a)}] $x_{0}$ is a bicyclic vector for $T$;
  \item[\emph{(b)}] the series $\sum_{n\in\Z}T^{-n}x_{0}$ is unconditionally convergent;
  \item[\emph{(c)}] there exists a non-zero functional $x_{0}^{*}\in X^{*}$ and a finite subset $F\subseteq \Z$ such that $\pss{x_{0}^{*}}{T^{n}x_{0}}=0$ for every $n\in\Z\setminus F$.
 \end{enumerate}
 Then \t\ admits a frequently \hy\ vector $x$ with an irregularly visiting orbit. More precisely,
 $\underline{\emph{dens}}\,\nt{x}{U_{0}}<\overline{\emph{dens}}\,\nt{x}{U_{0}}$, where $U_{0}=\{y\in X\,;\,\Re e\pss{x_{0}^{*}}{y}>0\}$.
\end{theorem}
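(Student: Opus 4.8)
The plan is to realise a \fhy\ vector as an unconditionally convergent series built from the bilateral orbit of $x_{0}$, and to exploit hypothesis (c) to gain \emph{exact} control of the indicator of $U_{0}$ along that orbit. Write $x_{m}=T^{m}x_{0}$ for $m\in\Z$ and, for a bounded scalar sequence $\gamma=(\gamma_{m})_{m\in\Z}$, set $x_{\gamma}=\sum_{m\in\Z}\gamma_{m}x_{m}$. By (b) the series $\sum_{m}x_{m}$ converges unconditionally, so $x_{\gamma}$ is well defined, $\Vert\sum_{m\in S}\gamma_{m}x_{m}\Vert\le C\sup_{m}|\gamma_{m}|$ for every $S\subseteq\Z$, and the tails $\Vert\sum_{m\in S}\gamma_{m}x_{m}\Vert$ become uniformly small once $S$ avoids a large finite window. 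Setting $b_{f}=\pss{x_0^*}{x_{f}}$, which by (c) vanishes off the finite set $F$, continuity of $x_0^*$ yields the \emph{exact} identity
\begin{equation*}
\pss{x_0^*}{T^{N}x_{\gamma}}=\sum_{m\in\Z}\gamma_{m}\,b_{m+N}=\sum_{f\in F}b_{f}\,\gamma_{f-N},\qquad N\ge 1,
\end{equation*}
a finite sum. Hence $\nt{x_{\gamma}}{U_{0}}=\{N\ge1:\ \Re\sum_{f\in F}b_{f}\gamma_{f-N}>0\}$ is determined by finitely many coordinates of $\gamma$, \emph{independently of any approximation error}. This is the leverage provided by (c).

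First I would set up the explicit construction underlying the Frequent Hypercyclicity Criterion in these coordinates. Fix a sequence $(y_{j})_{j\ge1}$, dense in $X$ and made of finite combinations of the $x_{m}$ (possible by (a)), chosen so that at least one $y_{j}$ lies well inside $U_{0}$ and at least one lies well inside the opposite half-space $\{y\in X:\Re\pss{x_0^*}{y}<0\}$. For each $j$ I would distribute a set $A_{j}\subseteq\N$ of return times at which $T^{N}x_{\gamma}$ is made $\varepsilon_{j}$-close to $y_{j}$; this amounts to prescribing, on well-separated and isolated blocks of coordinates, the values $\gamma_{\ell-N}=c_{j,\ell}$ matching $y_{j}=\sum_{\ell}c_{j,\ell}x_{\ell}$, the isolation together with the uniform tail smallness from (b) guaranteeing the approximation. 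Two points are imposed: (i) the total density budget is kept strictly below $1$, so that a reservoir $Z\subseteq\N$ of coordinates of positive density is left free; and (ii) each target $y_{j}$ is approximated with density at least some fixed $d_{j}>0$ \emph{inside every window} introduced below, which secures $\di\,\nt{x_{\gamma}}{U}>0$ for every nonempty open $U$ (in particular for $U_{0}$, giving a baseline $\delta=\sum_{y_{j}\in U_{0}}d_{j}>0$), that is, frequent hypercyclicity.

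The irregularity is produced on the reservoir $Z$. Partition $\N$ into consecutive windows $W_{1}<W_{2}<\cdots$ growing fast enough that each $W_{s}$ dominates $W_{1}\cup\cdots\cup W_{s-1}$, so that the running frequency $\tfrac1N\#([1,N]\cap\nt{x_{\gamma}}{U_{0}})$ settles, as $N$ runs through $W_{s}$, to the frequency of $\nt{x_{\gamma}}{U_{0}}$ inside $W_{s}$. On odd (``high'') windows I would fill the reservoir coordinates along an arithmetic progression of step $d>\mathrm{diam}\,F$, setting $\gamma_{f_{0}-N}=\overline{b_{f_{0}}}\,t$ for a fixed $f_{0}\in F$ with $b_{f_{0}}\ne0$ and some $t>0$; the step condition forces, for those $N$, $\pss{x_0^*}{T^{N}x_{\gamma}}=|b_{f_{0}}|^{2}t>0$, so these $N$ join $\nt{x_{\gamma}}{U_{0}}$ and raise its frequency in $W_{s}$ to some $\Delta>\delta$. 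On even (``low'') windows I would leave the reservoir empty, so the frequency there returns to the baseline $\delta$. Because the windows dominate, this gives $\di\,\nt{x_{\gamma}}{U_{0}}=\delta<\Delta=\ds\,\nt{x_{\gamma}}{U_{0}}$; the fillers, being placed off the approximation blocks, influence $T^{N}x_{\gamma}$ only through negligible tails and hence do not disturb frequent hypercyclicity.

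The main obstacle is exactly the tension between the last two paragraphs: frequent hypercyclicity forces a positive lower density of visits to \emph{every} open set, hence an unavoidable baseline of visits to $U_{0}$ that cannot be thinned out, whereas the sought gap requires the $U_{0}$-visit frequency to drop back to that baseline infinitely often. Reconciling the two is what dictates the whole scheme: running the Criterion with a strictly sub-maximal density budget to free a positive-density reservoir, maintaining each target's density inside every window so that none of the lower densities $\di\,A_{j}$ collapses under the fast-growing windows, and toggling only the reservoir to push $\ds$ above $\di$. The remaining work is bookkeeping, quantifying the window growth and the isolation of blocks so that the approximations survive, the series $x_{\gamma}$ converges, and the two densities $\delta$ and $\Delta$ are attained exactly; the exact identity above is precisely what makes the $U_{0}$-bookkeeping rigorous rather than merely approximate.
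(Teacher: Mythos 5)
Your proposal is correct in substance and shares the paper's core framework --- a Frequent Hypercyclicity Criterion-type series built on the bilateral orbit of $x_0$, with hypothesis (c) used to compute the return times to $U_0$ \emph{exactly}; your identity $\pss{x_0^*}{T^Nx_\gamma}=\sum_{f\in F}b_f\gamma_{f-N}$ is precisely what the paper exploits when it observes that $\pss{x_0^*}{T^nx}$ can be nonzero only for $n$ within distance $d$ of the blocks, where it equals $\pss{x_0^*}{T^{n-k}x_s}$ --- but your mechanism for producing the irregularity is genuinely different. The paper makes the carrier sets $D_s$ themselves oscillate: each $D_s$ has positive lower density, yet the normalized counts $\frac{1}{N_l}\#\,D_s\cap[1,N_l]$ converge to two different limits $\beta_s<\gamma_s$ along two common subsequences (arranged via a dyadic construction indexed by $J=5\N\cup(5\N+2)$, with a dominated-convergence argument summing the per-block visit counts $r_s$), so the oscillation of $\nt{x}{U_0}$ is inherited from that of the $D_s$. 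You instead keep the approximation sets at constant density in every window and toggle a separate positive-density ``reservoir'' on and off across dominating windows, with filler coefficients $\gamma_{f_0-N}=\overline{b_{f_0}}\,t$ chosen directly through (c) to force visits; this cleanly decouples the frequent-hypercyclicity part from the irregularity part, at the cost of one extra component whose separation from the blocks must be maintained. Three points you dismiss as bookkeeping do need care: first, the opening reduction to \emph{bounded} $\gamma$ does not cover the actual construction, since a dense sequence of targets forces unbounded coefficients; one needs the paper's device of allowing coefficients of size $c_s$ only at positions beyond $2^s$, with $\sum_s c_s\varepsilon_s<+\infty$ (the paper's conditions ($\alpha$), ($\beta$), ($\gamma$)). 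Second, for the density gap, ``density at least $d_j$ in every window'' is not enough: the approximation sets' contribution to the $U_0$-visit frequency must be asymptotically \emph{equal} along low and high windows (e.g.\ exact density $d_j$ in each window), otherwise the baseline could drift and swallow the reservoir's contribution. Third, the baseline is not $\sum_{y_j\in U_0}d_j$ but $\sum_j r_jd_j$, where $r_j$ counts all visits in the $2d$-enlarged window around each block (a block can contribute visits even when $y_j\notin U_0$); this changes nothing structurally, since only equality of the baselines across windows matters, but it is the correct count.
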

As already shown in \cite{G}, many classical frequently \hy\ \op s satisfy the assumptions of Theorem \ref{Theorem 1}. Among them are multiples $\omega B$ with $|\omega |>1$ of the unilateral backward shift on $\ell_{p}(\N)$, $1\le p<+\infty$, or $c_{0}(\N)$, as well as adjoints $M^{*}_{\varphi }$ of multipliers of the Hardy space $H^{2}(\D)$ when
$\smash{\apl{\varphi }{\D}{\C}}$ is a non-constant holomorphic function on the open unit disk $\D=\{\lambda \in\C\,;\,|\lambda |<1\}$ such that $\T\subseteq\varphi (\D)$, where 
$\T=\{\lambda \in\C\,;\,|\lambda |=1\}$ denotes the unit circle. This relies on the fact that these \op s admit a spanning unimodular eigenvector field which is sufficiently smooth. Recall that if $\sigma $ is a \pr\ measure on $\T$, 
$E\in L^{2}(\T,\sigma ;X)$ is a unimodular eigenvector field of $T\in \bx$ (with respect to $\sigma $) if $TE(\lambda )=\lambda E(\lambda )$ $\sigma $-a.e\ on $\T$. We will only consider here the case where $\sigma $ is the normalized Lebesgue measure $d\lambda$ on $\T$. The Fourier coefficients of $E$ are defined by \[\wh{E}(n)=\displaystyle\int_{\T}\lambda ^{-n}E(\lambda )\,d\lambda, \quad n\in\Z.\]
\begin{corollary}\label{Corollary 2}
 Let \t\ be a bounded \op\ on a separable complex Banach space $X$ which admits a unimodular eigenvector field $E$ with the following properties:
 \begin{enumerate}
  \item [\emph{(a)}] for every measurable subset $B$ of $\T$ of full Lebesgue measure, 
  $\overline{\emph{span}\vphantom{t}}\,[E(\lambda )\,;\,\lambda \in B]=X$;
  \item[\emph{(b)}] the series $\sum_{n\in\Z}\wh{E}(n)$ is unconditionally convergent;
  \item[\emph{(c)}] there exists a trigonometric polynomial $p$ and a non-zero functional $x_{0}^{*}\in X^{*}$ such that $\pss{x_{0}^{*}}{E(\lambda )}=p(\lambda )$ a.e.\ on $\T$.
 \end{enumerate}
 Then \t\ admits a frequently \hy\ vector with an irregularly visiting orbit.
 \end{corollary}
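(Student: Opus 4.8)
\emph{Proof of Corollary~\ref{Corollary 2} (plan).}
The plan is to deduce Corollary~\ref{Corollary 2} from Theorem~\ref{Theorem 1} by choosing as base vector the mean $x_{0}=\wh{E}(0)=\int_{\T}E(\lambda)\,d\lambda$ and checking that hypotheses (a)--(c) of the theorem hold. The starting point is to read the two-sided orbit of $x_{0}$ off the Fourier coefficients of $E$. Since $T$ is bounded and $E$ is Bochner integrable, $T$ passes inside the integral, and using $TE(\lambda)=\lambda E(\lambda)$ one finds $T\wh{E}(m)=\int_{\T}\lambda^{-(m-1)}E(\lambda)\,d\lambda=\wh{E}(m-1)$ for every $m\in\Z$. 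Hence, setting $x_{n}:=\wh{E}(-n)$, one gets $Tx_{n}=x_{n+1}$ for all $n\in\Z$, so the family $(x_{n})_{n\in\Z}$ is a legitimate two-sided orbit in the sense required for bicyclicity, with forward iterates $\wh{E}(-1),\wh{E}(-2),\dots$ and backward iterates $\wh{E}(1),\wh{E}(2),\dots$; with this identification $T^{n}x_{0}=\wh{E}(-n)$.

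With the orbit thus described, hypotheses (b) and (c) are immediate. Hypothesis (b) of Theorem~\ref{Theorem 1} reads $\sum_{n\in\Z}T^{-n}x_{0}=\sum_{n\in\Z}\wh{E}(n)$, which is unconditionally convergent by assumption (b) of the corollary. For (c), I would interchange $x_{0}^{*}$ with the integral and use the a.e.\ identity $\pss{x_{0}^{*}}{E(\lambda)}=p(\lambda)$ to compute $\pss{x_{0}^{*}}{T^{n}x_{0}}=\pss{x_{0}^{*}}{\wh{E}(-n)}=\int_{\T}\lambda^{n}\,p(\lambda)\,d\lambda=\wh{p}(-n)$. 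As $p$ is a trigonometric polynomial, $\wh{p}(-n)=0$ for all but finitely many $n$, so (c) holds with $F=\{n\in\Z\,;\,\wh{p}(-n)\ne 0\}$ and with the given non-zero functional $x_{0}^{*}$, which is precisely the functional defining $U_{0}$.

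The one substantial point is hypothesis (a), that $x_{0}$ is bicyclic, i.e.\ $\overline{\vect}\,[\wh{E}(n)\,;\,n\in\Z]=X$; this is where the work lies and where assumption (a) of the corollary enters. I would argue by duality. Let $\psi\in X^{*}$ annihilate every Fourier coefficient $\wh{E}(n)$. Writing $g(\lambda)=\pss{\psi}{E(\lambda)}$, which belongs to $L^{2}(\T,\sigma)$ because $E\in L^{2}(\T,\sigma;X)$, one has $\wh{g}(n)=\pss{\psi}{\wh{E}(n)}=0$ for every $n\in\Z$, whence $g=0$ a.e. Thus $\psi$ vanishes on $E(\lambda)$ for all $\lambda$ in a set $B\subseteq\T$ of full Lebesgue measure, and assumption (a) of the corollary forces $\overline{\vect}\,[E(\lambda)\,;\,\lambda\in B]=X$, so $\psi=0$. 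By Hahn--Banach the closed linear span of the Fourier coefficients is all of $X$, which is exactly (a). Once (a)--(c) are verified, Theorem~\ref{Theorem 1} applies directly and yields a frequently \hy\ vector $x$ with $\di\,\nt{x}{U_{0}}<\ds\,\nt{x}{U_{0}}$, completing the proof.
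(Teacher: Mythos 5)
Your proposal is correct and follows exactly the route the paper intends: the paper reduces Corollary~\ref{Corollary 2} to Theorem~\ref{Theorem 1} but omits the verification, citing \cite{G} instead, and your argument (taking $x_{0}=\wh{E}(0)$, identifying the two-sided orbit via $T\wh{E}(m)=\wh{E}(m-1)$ so that $T^{n}x_{0}=\wh{E}(-n)$, reading (b) and (c) off the Fourier coefficients of $E$ and of $p$, and proving bicyclicity by duality from the uniqueness theorem for Fourier coefficients together with assumption (a)) is precisely the standard verification being referenced. All the steps check out, including the interchanges of $T$ and of $x_{0}^{*}$ with the Bochner integral.
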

The proof of Theorem \ref{Theorem 1} relies on a construction inspired from the proof of the Frequent Hypercyclicity Criterion of \cite{BG1} or \cite{BoGE}  of a particular family $(D_{s})_{s\ge 1}$ of subsets of $\N$ with positive lower density. It is carried out in Section \ref{Section 2}. That \op s satisfying the assumptions of Corollary \ref{Corollary 2} satisfy those of Theorem \ref{Theorem 1} is already proved in \cite{G}, so we do not give the argument here. We also refer the reader to \cite{G} for more details concerning examples of \op s to which Theorem \ref{Theorem 1} and Corollary \ref{Corollary 2} apply, and which thus have frequently \hy\ vectors with irregularly visiting orbits. We collect in Section \ref{Section 3} some further remarks and open questions.
\par\smallskip
Given a family $(A_i)_{i\in I}$ of disjoint subsets of $\Z$, we denote their (disjoint) union by \[\coprod_{i\in I}A_i.\]

\section{Proof of Theorem \ref{Theorem 1}}\label{Section 2}
Before starting the proof, let us observe that \op s satisfying the assumptions of Theorem \ref{Theorem 1} satisfy the Frequent Hypercyclicity Criterion. Indeed, the set of vectors of the form $z=\sum_{k\in G}a_{k}T^{-k}x_{0}$, $G$ a finite subset of $\Z$, $a_{k}\in \C$ for every $k\in G$, is dense in $X$ by assumption (a), and by assumption (b) any vector $z$ of this form is such that the series 
$\sum_{n\in\Z}T^{n}z$ is unconditionally convergent. So we know already that such  \op s must be \fhy. The \fhy\ vectors with irregularly visiting orbit which we will construct will be defined as  unconditionally convergent series $x=\sum_{p\ge 1}z_{p}$, where $z_{p}=\sum_{n\in G_{p}}a_{n}T^{-n}x_{0}$ and the finite sets $G_{p}$, $p\ge 1$, are successive disjoint intervals in $\N$.
\par\medskip
For every integer $s\ge 1$, let $\mathcal{G}_{s}$ denote the class of (finite or infinite) subsets $G$ of $\Z$ such that $\min_{n\in G}|n|\ge 2^{s}$. We also denote by $\varepsilon _{s}$ the smallest constant with the property that 
\[
\Bigl |\Bigl |\,\sum_{n\in G}\beta _{n}T^{-n}x_{0}\, \Bigr|  \Bigr|\le \varepsilon _{s}\,\max_{n\in G}|\beta _{n}| 
\]
for every $G\in\mathcal{G}_{s}$ and every family $(\beta _{n})_{n\in G}$ of scalars.
Since the series $\sum_{n\in\Z}T^{-n}x_{0}$ is unconditionally convergent, $\varepsilon _{s}$ tends to $0$ as $s$ tends to infinity. Let then $(c_{s})_{s\ge 1}$ be a sequence of nonnegative numbers with the  following three properties:
\begin{enumerate}
 \item [($\alpha$)] $\limsup\limits _{s\to+\infty}c_{s}=+\infty$;
 \item[($\beta$)] $\varepsilon _{s} \displaystyle\sum_{1\le s'<s}c _{s'}\longrightarrow 0 $ as $s\longrightarrow +\infty$;
 \item[($\gamma$)] the series $\displaystyle\sum_{s\ge 1}c_{s}\varepsilon _{s}$ is convergent.
\end{enumerate}
Such a sequence does exist: it suffices to consider a (fast increasing) sequence $(s_{j})_{j\ge 1}$ of integers such that the series $\sum_{j\ge 1}j^{2}\varepsilon _{s_{j}}$ is convergent, and to define the sequence $(c_{s})_{s\ge 1}$ by setting $c_{s}=0$ if $s\not\in\{s_{j}\,;\,j\ge 1\}$ and $c_{s_{j}}=j$ for every $j\ge 1$.
\par\medskip
Let then $(x_{s})_{s\ge 1}$ be a dense sequence of vectors of $X$ of the form
\begin{equation}\label{eqenplus}
 x_{s}=\sum_{|j|\le 2^{s}}a_{j}^{(s)}T^{j}x_{0},\quad \textrm{with}\ \max_{|j|\le 2^{s}}|a_{j}^{(s)}|\le c_{s}\;\textrm{ for every } s\ge 1.
\end{equation}
Such a sequence exists by assumptions (a) and ($\alpha$).
Indeed, since $x_0$ is a bicyclic vector for $T$, there exists a dense sequence $(y_{q})_{q\ge 1}$  of vectors of $X$ of the form
\[
y_{q}=\sum_{|j|\le N_q}b_{j}^{(q)}T^{j}x_{0},\quad \textrm{with }N_q\ge 1 \textrm{ and }b_{j}^{(q)}\in\C \textrm{ for every }|j|\le N_q.
\]
By assumption ($\alpha$), there exists a strictly increasing sequence $(s_{q})_{q\ge 1}$ of integers such that $N_q\le 2^{s_q}$ and $\max_{|j|\le N_q}|b_{j}^{(q)}|\le c_{s_q}$ for every $q\ge 1$. Set $x_{s_q}=y_q$ for every $q\ge 1$, and $x_s=0$ for every $s\in\N\setminus\{s_q\;;\;q\ge 1\}$. The sequence $(x_{s})_{s\ge 1}$ is then dense in 
$X$ and satisfies (\ref{eqenplus}).

\par\smallskip
 Our \fhy\ vector with an irregularly visiting orbit will be of the form
\begin{equation}\label{Equation 1}
 x=\sum_{s\ge 1}\sum_{k\in D_{s}}T^{-k}x_{s}
\end{equation}
where the sets $D_{s}$, $s\ge 1$, are subsets of $\N$ with positive lower density which are sufficiently separated from each other. 

\begin{lemma}\label{Lemma 1}
 Suppose that $(D_{s})_{s\ge 1}$ is a sequence of subsets of $\N$ with the following properties:
 \begin{enumerate}
  \item [\emph{(i)}]$\min D_{s}\ge 2^{s+1}$ for every $s\ge 1$;
  \item[\emph{(ii)}] for every $s\ge 1$ and every $i,i'\in D_s$ with $i\not =i'$, $|i-i'|\ge 2^{s+1}+1$;
  \item[\emph{(iii)}] the sets $D_{s}$, $s\ge 1$, are pairwise disjoint and, more precisely, for every $s, s'\ge 1$ with $s\neq s'$, every $i\in D_{s}$ and every $i'\in D_{s'}$, $|i-i'|\ge 2^{\max(s,s')+1}+1$;
  \item[\emph{(iv)}] for every $s\ge 1$, $D_{s}$ has positive lower density.
 \end{enumerate}
Then the vector $x$ given by (\ref{Equation 1}) is well-defined and \fhy\ for \t. 
\end{lemma}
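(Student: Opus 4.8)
The plan is to expand $x$ and all the relevant iterates $T^{k_0}x$ in the system $(T^{-m}x_0)_{m\in\Z}$, use the separation hypotheses (i)--(iii) to make the resulting ``supports'' pairwise disjoint, and then push everything through the constant $\varepsilon_s$ together with properties $(\alpha)$--$(\gamma)$ of $(c_s)$. The starting computation is $T^{-k}x_s=\sum_{|j|\le 2^s}a_j^{(s)}T^{-(k-j)}x_0$, so that $T^{-k}x_s$ is supported, in the $(T^{-m}x_0)$-expansion, on the interval $[k-2^s,\,k+2^s]$ of length $2^{s+1}$ centred at $k$. Hypotheses (ii) and (iii) are designed precisely so that these blocks are pairwise disjoint as $(s,k)$ ranges over all pairs with $k\in D_s$, while (i) forces every such block to lie in $\{m\ge 2^s\}$.

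For well-definedness I would set $z_s=\sum_{k\in D_s}T^{-k}x_s$. Its support lies in $\mathcal{G}_s$ and, by (\ref{eqenplus}), its coefficients are bounded by $c_s$; hence the definition of $\varepsilon_s$ gives $\|z_s\|\le \varepsilon_s c_s$, and $\sum_s\|z_s\|<\infty$ by $(\gamma)$. Thus $x=\sum_s z_s$ converges (absolutely in the grouping by $s$, in fact unconditionally), so $x$ is well defined, and since each $T^{k_0}$ is bounded it may be applied term by term.

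The heart of the argument is the claim that $\sup_{k_0\in D_{s_0}}\|T^{k_0}x-x_{s_0}\|\to 0$ as $s_0\to+\infty$. Fixing $s_0$ and $k_0\in D_{s_0}$, one has $T^{k_0}x=\sum_{s,k}T^{k_0-k}x_s$, whose $(s_0,k_0)$-term is exactly $x_{s_0}$; the error $T^{k_0}x-x_{s_0}$ is the sum over the remaining pairs, and after the shift by $-k_0$ the block of $(s,k)$ becomes $[(k-k_0)-2^s,\,(k-k_0)+2^s]$, the blocks staying disjoint. I would then split the error into three parts. In part (A), with $s=s_0$ and $k\neq k_0$, hypothesis (ii) gives $|k-k_0|\ge 2^{s_0+1}+1$, so the indices satisfy $|m|\ge 2^{s_0}$ and this part is $\le \varepsilon_{s_0}c_{s_0}$. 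In part (B), with $s<s_0$, hypothesis (iii) gives $|k-k_0|\ge 2^{s_0+1}+1$, so again $|m|\ge 2^{s_0}$, i.e.\ every such block lies in $\mathcal{G}_{s_0}$; bounding each level $s$ by $\varepsilon_{s_0}c_s$ and summing yields $\le \varepsilon_{s_0}\sum_{1\le s<s_0}c_s$. In part (C), with $s>s_0$, hypothesis (iii) gives $|m|\ge 2^s$, so this part is $\le \sum_{s>s_0}\varepsilon_s c_s$. Parts (A) and (B) together are bounded by $\varepsilon_{s_0}\sum_{1\le s\le s_0}c_s$, which tends to $0$ by $(\beta)$ and $(\gamma)$, while part (C) is a tail of the convergent series $\sum_s\varepsilon_s c_s$ and tends to $0$ by $(\gamma)$. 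This proves the claim.

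To finish, given any non-empty open set $U$, I would invoke density of $(x_s)$ --- assuming, as we may by construction, that every open set contains $x_s$ for arbitrarily large $s$ --- to pick $s_0$ large enough that the uniform error above is smaller than the radius of some ball $B(x_{s_0},\delta)\subseteq U$. Then $T^{k_0}x\in U$ for every $k_0\in D_{s_0}$, so $D_{s_0}\subseteq\nt{x}{U}$ and $\di\,\nt{x}{U}\ge\di\,D_{s_0}>0$ by (iv); hence $x$ is \fhy. The main obstacle is the bookkeeping in part (B): one must recognize that the strong separation (iii) pushes all the low-level blocks ($s<s_0$) out to $|m|\ge 2^{s_0}$, so that they may be controlled by the \emph{single} constant $\varepsilon_{s_0}$ and then summed through $(\beta)$, rather than through the divergent $\max_{s<s_0}c_s$. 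A secondary but essential point is that the dense sequence must visit every open set for arbitrarily large indices $s_0$, so that $s_0$ can always be taken large enough to beat a prescribed $\delta$.
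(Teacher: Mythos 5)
Your proposal is correct and follows essentially the same route as the paper: the same expansion of $x$ in the system $(T^{-m}x_0)_{m\in\Z}$ with disjoint blocks $[k-2^s,k+2^s]$, the same three-part splitting of $T^{n}x-x_{r}$ (the $s=r$ terms via (ii), the $s<r$ and $s>r$ terms via (iii)), and the same bounds $\varepsilon_{r}\sum_{s<r}c_s+\sum_{s\ge r}c_s\varepsilon_s$ controlled by $(\beta)$ and $(\gamma)$. Your closing observation about needing $x_s\in U$ for arbitrarily large $s$ is automatic from density of the sequence $(x_s)$ in a Banach space (every non-empty open set contains infinitely many terms), so it is a legitimate finishing step rather than an extra hypothesis.
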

\begin{proof}
 The proof of this lemma is very similar to that of the Frequent Hypercyclicity Criterion in \cite{BG1} or \cite{BoGE}, so we will be somewhat sketchy. We have
 \begin{equation}
x=\sum_{s\ge 1}\sum_{k\in D_{s}}\sum_{|j|\le 2^{s}}a_{j}^{(s)}T^{-k+j}x_{0}
=\sum_{s\ge 1}\sum_{k\in D_{s}}\sum_{i\in I_{k,s}}b_{i}T^{-i}x_{0}\notag
\end{equation}
where $I_{k,s}=[k-2^{s},k+2^{s}]$ and $b_{i}=a_{k-i}^{(s)}$ for every $i\in I_{k,s}$, $k\in D_{s}$, $s\ge 1$. Conditions (ii) and (iii) imply that the intervals $I_{k,s}$ are pairwise disjoint. By condition (i), we have $\min I_{k,s}\ge 2^{s}$ for every $k\in D_{s}$ and $s\ge 1$, so that the set \[I_s=\coprod_{k\in D_s}I_{k,s}\] belongs to $\mathcal{G}_{s}$. It follows that the series $\sum_{i\in I_{s}}b_{i}T^{-i}x_{0}$, which is unconditionally convergent since $\sup_{i\in I_s}|b_i|=\max_{|j|\le 2^s}|a_j^{(s)}|\le c_s$, satisfies
\[
\Bigl| \Bigl|\sum_{i\in I_{s}}b_{i}T^{-i}x_{0} \Bigr|  \Bigr|\le\max_{i\in I_{s}}|b_{i}|\,.\,\varepsilon _{s}\le c_{s}\varepsilon _{s}. 
\]
Hence the series $\sum_{s\ge 1}\sum_{i\in I_{s}}b_{i}T^{-i}x_{0}$ is (unconditionally) convergent by  ($\gamma$), and the vector $x$ is well-defined.
\par\medskip
Fix now $r\ge 1$. For any $n\in D_{r}$, we have
\begin{equation}\label{Equation 3}
 T^{n}x=\sum_{\genfrac{}{}{0pt}{1}{s\ge 1}{s\neq r}}\sum_{k\in D_{s}}T^{n-k}x_{s}+x_{r}+\sum_{k\in D_{r}\setminus \{n\}}T^{n-k}x_{r}.
\end{equation}
For every $k\in D_{r}\setminus \{n\}$
\[
||T^{n-k}x_{r}||=\Bigl |\Bigl |\sum_{i\in[k-n-2^{r},\,k-n+2^{r}]}a_{k-n-i}^{(r)}T^{-i}x_{0} \Bigr|  \Bigr|. 
\]
By (ii), $|k-n|\ge 2^{r+1}+1$, so that the set \[\coprod_{k\in D_{r}\setminus \{n\}}[k-n-2^{r},k-n+2^{r}]\] belongs to the class $\mathcal{G}_{r}$. It follows that \[\Bigl |\Bigl |\sum_{k\in D_{r}\setminus \{n\}}T^{n-k}x_{r}\Bigr |\Bigr |\le c_{r}\varepsilon _{r}.\] 
\par\medskip
As to the first term in the expression (\ref{Equation 3}), we estimate it in a similar way, using this time (iii). For every $s\neq r$ and every $k\in D_{s}$,
\[
T^{n-k}x_{s}=\sum_{i\in[k-n-2^{s},\,k-n+2^{s}]}a_{k-n-i}^{(s)}T^{-i}x_{0},
\]
and $|k-n|\ge 2^{\max(r,s)+1}+1$ by (iii). It follows that the set \[\coprod_{k\in D_s}[k-n-2^{s},k-n+2^{s}]\] 
belongs to the class $\mathcal{G}_{r}$ if $s<r$, and to the class $\mathcal{G}_{s}$ if $s>r$. Hence \[\Bigl |\Bigl |\sum_{k\in D_s}T^{n-k}x_{s}\Bigr |\Bigr |\le c_{s}\varepsilon _{r}\quad \textrm{if } s<r \quad \textrm{and} \quad\Bigl |\Bigl |\sum_{k\in D_s}T^{n-k}x_{s}\Bigr |\Bigr |\le c_{s}\varepsilon _{s}\quad \textrm{if } s>r.\] Thus 
\begin{align*}
 \Bigl | \Bigl |\sum_{\genfrac{}{}{0pt}{1}{s\ge 1}{s\neq r}}\sum_{k\in D_{s}}T^{n-k}x_{s} \Bigr| \Bigr|&\le\Bigl (\sum_{s<r} c_{s} \Bigr)\varepsilon _{r}+
\sum_{s>r}c_{s}\varepsilon _{s},
\intertext{so that}
||T^{n}x-x_{r}||&\le\Bigl (\sum_{s<r} c_{s} \Bigr)\varepsilon _{r}+
\sum_{s\ge r}c_{s}\varepsilon _{s}.
\end{align*}
Conditions ($\beta$) and ($\gamma$) on the sequence $(c_{s})_{s\ge 1}$ then imply that given $\varepsilon >0$, there exists $r_{0}\ge 1$ such that $\sup_{n\in D_{r}}||T^{n}x-x_{r}||<\varepsilon $ for every $r\ge r_{0}$. Thus $x$ is a \fhy\ vector for \t.
\end{proof}

Under additional assumptions on the sets $D_{s}$, the vector $x$ given by 
(\ref{Equation 1}) has an irregularly visiting orbit. In the statement of Lemma \ref{Lemma 2} below, we denote by $d(n,A)$ the distance of an integer $n\in\Z$ to a subset $A$ of $\Z$. Let also $d\ge 1$ be an integer such that $F\subseteq [-d,d]$. 
\begin{lemma}\label{Lemma 2}
 Suppose that $(D_{s})_{s\ge 1}$ is a family of subsets of $\N$ satisfying assumptions \emph{(i)} and \emph{(iv)} of Lemma \ref{Lemma 1}, as well as the following reinforced versions \emph{(ii')} and \emph{(iii')} of \emph{(ii)} and \emph{(iii)} respectively:\
 \begin{enumerate}
\item [\emph{(ii')}] for every $s\ge 1$ and every $i,i'\in D_s$ with $i\not =i'$, $|i-i'|\ge 2^{s+1}+2d+1$;
  \item [\emph{(iii')}] for every $s,s'\ge 1$ with $s\neq s'$, and every $i\in D_{s}$, $i'\in D_{s'}$, $|i-i'|\ge 2^{\max(s,s')+1}+2d+1$;
 \end{enumerate}
and
\begin{enumerate}
 \item [\emph{(v)}] there exists a strictly increasing sequence $(N_{l})_{l\ge 1}$ of integers such that
\smallskip
 \begin{enumerate}
 \item[\emph{(v-a)}] $d(N_{l},D_{s})\ge 2^{s}+d$ for every $s\ge 1$ and every $l\ge 1$;
 \item[\emph{(v-b)}] the series $\sum_{s\ge 1} 2^s \alpha_s$ is convergent, where $\alpha_s=\sup_{l\ge 1} \frac{1}{N_l}\# D_s\cap[1,N_l]$ for every  $s\ge 1$;
 \item[\emph{(v-c)}] there exist two subsequences $(N_l^{(1)})_{l\ge 1}$ and $(N_l^{(2)})_{l\ge 1}$ of $(N_l)_{l\ge 1}$ and, for every $s\ge 1$, two positive constants $0<\beta_s<\gamma_s$
 such that  \[
\dfrac{1}{N_{l}^{(1)}}\# D_{s}\cap[1,N_{l}^{(1)}] \longrightarrow \beta_s \quad \textrm{and}\quad \dfrac{1}{N_{l}^{(2)}}\# D_{s}\cap[1,N_{l}^{(2)}] \longrightarrow \gamma_s             
            \]
 as  $l$ tends to infinity.
\end{enumerate}
\end{enumerate}
Then the vector $x\in X$ given by the expression \emph{(\ref{Equation 1})} has an irregularly visiting orbit. More precisely, the set $D=\{n\ge 1\,;\,\Re e\pss{x_{0}^{*}}{T^{n}x}>0\}$ has no density.
\end{lemma}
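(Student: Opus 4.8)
The plan is to use assumption (c) to show that the visits of $x$ to $U_0$ are tightly localized around the sets $D_s$. First I would write, using continuity of $T^n$ and the unconditional convergence established in Lemma~\ref{Lemma 1}, $T^{n}x=\sum_{s\ge 1}\sum_{k\in D_{s}}T^{n-k}x_{s}$, so that $\pss{x_{0}^{*}}{T^{n}x}=\sum_{s\ge 1}\sum_{k\in D_{s}}\pss{x_{0}^{*}}{T^{n-k}x_{s}}$. Since $\pss{x_{0}^{*}}{T^{n-k}x_{s}}=\sum_{|j|\le 2^{s}}a_{j}^{(s)}\pss{x_{0}^{*}}{T^{n-k+j}x_{0}}$ and, by (c), $\pss{x_{0}^{*}}{T^{m}x_{0}}=0$ for $m\notin F\subseteq[-d,d]$, this term can be non-zero only when $n-k+j\in F$ for some $|j|\le 2^{s}$, i.e.\ only when $n$ lies in the interval $J_{k,s}=[k-2^{s}-d,\,k+2^{s}+d]$. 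The reinforced separation conditions (ii') and (iii'), whose extra buffer $2d$ is tailored exactly to this radius $2^{s}+d$, guarantee that the intervals $J_{k,s}$, $k\in D_{s}$, $s\ge 1$, are pairwise disjoint. Hence for each $n$ the double sum has at most one non-zero term: either $n$ lies outside all the $J_{k,s}$, so $\pss{x_{0}^{*}}{T^{n}x}=0$ and $n\notin D$, or $n$ belongs to a unique $J_{k,s}$, so $\pss{x_{0}^{*}}{T^{n}x}=\pss{x_{0}^{*}}{T^{n-k}x_{s}}$ depends only on $s$ and on $m=n-k$.

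This is the structural heart of the argument, and it produces a quantization of the visits: setting
\[
\kappa_{s}=\#\bigl\{m\in\Z\,;\,|m|\le 2^{s}+d,\ \Re e\pss{x_{0}^{*}}{T^{m}x_{s}}>0\bigr\},
\]
each interval $J_{k,s}$ with $k\in D_{s}$ contributes exactly $\kappa_{s}$ elements to $D$, a number independent of $k$. Since $x_{0}^{*}\neq 0$, the set $U_{0}$ is a non-empty open set, so by density of $(x_{s})_{s\ge1}$ there is at least one index $s_{0}$ with $x_{s_{0}}\in U_{0}$, i.e.\ with $\Re e\pss{x_{0}^{*}}{x_{s_{0}}}>0$; taking $m=0$ then shows $\kappa_{s_{0}}\ge 1$.

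Next I would count $D$ along the sequence $(N_{l})$. The point of (v-a) is that, since $d(N_{l},D_{s})\ge 2^{s}+d$, no interval $J_{k,s}$ straddles $N_{l}$: for $k\in D_{s}$ either $J_{k,s}$ lies entirely to the left of $N_{l}$ (when $k\le N_{l}$) or entirely to its right. Hence, up to a bounded error coming only from the finitely many intervals meeting $\{1\}$ and from the single point $N_{l}$,
\[
\frac{1}{N_{l}}\,\#\bigl(D\cap[1,N_{l}]\bigr)=\sum_{s\ge 1}\kappa_{s}\,\frac{1}{N_{l}}\,\#\bigl(D_{s}\cap[1,N_{l}]\bigr)+O\!\left(\tfrac{1}{N_{l}}\right).
\]
Since $\kappa_{s}\le 2(2^{s}+d)+1$ and $\tfrac1{N_{l}}\#(D_{s}\cap[1,N_{l}])\le\alpha_{s}$, condition (v-b) furnishes an $l$-independent summable majorant $\kappa_{s}\alpha_{s}$, so I may pass to the limit term by term along the subsequences of (v-c), obtaining
\[
\lim_{l}\frac{1}{N_{l}^{(1)}}\#\bigl(D\cap[1,N_{l}^{(1)}]\bigr)=\sum_{s\ge1}\kappa_{s}\beta_{s}
\quad\text{and}\quad
\lim_{l}\frac{1}{N_{l}^{(2)}}\#\bigl(D\cap[1,N_{l}^{(2)}]\bigr)=\sum_{s\ge1}\kappa_{s}\gamma_{s}.
\]
As $\di\,D$ is at most the first subsequential limit and $\ds\,D$ is at least the second, and since $\beta_{s}<\gamma_{s}$ for every $s$ while $\kappa_{s_{0}}\ge1$, we get $\sum_{s}\kappa_{s}\beta_{s}<\sum_{s}\kappa_{s}\gamma_{s}$, whence $\di\,D<\ds\,D$ and $D=\nt{x}{U_{0}}$ has no density.

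I expect the main obstacle to be the clean passage from disjointness to quantization together with the bounded-error bookkeeping in the counting identity: one must verify that the buffer $2d$ in (ii')–(iii') is precisely what keeps the intervals $J_{k,s}$ disjoint across all scales $s$, and that the boundary contributions (at $1$ and at $N_{l}$) stay $O(1)$ uniformly in $l$, so as not to perturb the densities. The term-by-term passage to the limit in the infinite sum, legitimized by the majorant of (v-b), is the other point requiring care.
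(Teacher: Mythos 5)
Your proposal is correct and follows essentially the same route as the paper's proof: localizing the possible visit times in the pairwise disjoint intervals $I_{k,s}+[-d,d]$ via assumption (c) and the reinforced separation (ii')--(iii'), observing that each such interval contributes a constant number $r_s$ (your $\kappa_s$) of elements to $D$ with $r_{s_0}\ge 1$ for some $s_0$ by density, and then counting along $(N_l)$ using (v-a) to prevent straddling and (v-b) as a dominated-convergence majorant so that (v-c) yields two distinct subsequential limits $\sum_s r_s\beta_s<\sum_s r_s\gamma_s$. Your explicit bookkeeping of the $O(1)$ boundary effects is, if anything, slightly more careful than the paper's, which asserts the exact identity $D\cap[1,N_l]=\coprod_{s}\coprod_{k\in D_s\cap[1,N_l]}I_{k,s}^{+}$ directly.
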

\begin{proof}
 Using the notation introduced in the proof of Lemma \ref{Lemma 1}, we write $x$ as
 \[
x=\sum_{s\ge 1}\sum_{k\in D_{s}}\sum_{i\in I_{k,s}}b_{i}T^{-i}x_{0}
\]
where $I_{k,s}=[k-2^{s},k+2^{s}]$ and $b_{i}=a_{k-i}^{(s)}$ for every $i\in I_{k,s}$, $k\in D_{s}$, $s\ge 1$. By assumptions (ii') and (iii'), the intervals $I_{k,s}$ are pairwise disjoint, and the distance between two of them is always at least $2d+1$. For every $n\in \Z$, we have
\[
T^{n}x=\sum_{s\ge 1}\sum_{k\in D_{s}}\sum_{i\in I_{k,s}}a_{k-i}^{(s)}T^{n-i}x_{0}.
\]
Hence $\pss{x_{0}^{*}}{T^{n}x}$ can be non-zero only when $n$ belongs to the set 
\[
\coprod_{s\ge 1}\,\coprod_{k\in D_{s}} I_{k,s}+[-d,d].
\]
When $n\in I_{k,s}+[-d,d]$, $\pss{x_{0}^{*}}{T^{n}x}=\pss{x_{0}^{*}}{T^{n-k}x_s}$.
If we define, for every $s\ge 1$ and every $k\in D_{s}$, 
$
I_{k,s}^{+}=\bigl\{ n\in I_{k,s}+[-d,d]\,;\, \Re e\pss{x_{0}^{*}}{T^{n}x}>0\bigr\},
$ we have
\[
D=\coprod_{s\ge 1}\coprod_{k\in D_{s}} I_{k,s}^{+}
\]
 and
$
\#I_{k,s}^{+}= 
\#\bigl\{ p\in [-(2^{s}+d),2^{s}+d\,]\,;\, \Re e\pss{x_{0}^{*}}{T^{p}x_s}>0 \bigr\}.
$
The quantity $\#I_{k,s}^{+}$ does not depend on $k$, and we denote it by $r_{s}$. Since the sequence $(x_{s})_{s\ge 1}$ is dense in $X$, there exists $s_0\ge 1$ such that $r_{s_0}\ge 1$.  
Moreover, for every $l\ge 1$, we have by assumption (v-a) that if $k\in D_s\cap [1,N_{l}]$, $I_{k,s}^{+}\subseteq I_{k,s}+[-d,d]
\subseteq [1,N_{l}]$, so that
\[
D\cap[1,N_{l}]=\coprod_{s\ge 1}\,\,\coprod_{k\in D_{s}\cap [1,N_{l}]}I_{k,s}^{+}.
\]
Hence $\#D\cap [1,N_{l}]=\displaystyle\sum_{s\ge 1}r_{s}\,\,\# D_s\cap [1,N_{l}]$. 
Since $r_{s}\le 2^{s+1}+2d+1$, we have 
\[
r_{s}\,\dfrac{1}{N_l}\,\# D_s\cap [1,N_{l}]\le ({2^{s+1}+2d+1})\, \alpha_s        \quad \textrm{for every } l\ge 1.                       
\] 
It follows then from assumptions (v-b) and (v-c) and the dominated convergence theorem that 
\[
\liminf\limits _{l\to+\infty}\,\,\dfrac{1}{N_{l}}\,\,\# D\cap [1,N_{l}]\le\sum_{s\ge 1} r_s\beta _{s}\quad \textrm{while}\quad \limsup\limits_{l\to+\infty}\,\,\dfrac{1}{N_{l}}\,\,\# D\cap [1,N_{l}]\ge \sum_{s\ge 1} r_s\gamma _{s}.
\]
Since $\beta _{s}<\gamma _{s}$ for every $s\ge 1$ and since $r_{s_0}\ge 1$, we deduce that the set $D$ has no density, which proves our claim.
\end{proof}

In order to conclude the proof of Theorem \ref{Theorem 1}, it remains to construct sets $D_{s}$ satisfying the assumptions (i), (ii'), (iii'), (iv) and (v). We set, for every $j\ge 1$,  $I_{j}=[2^{j},2^{j+1})$. The $I_{j}$'s are successive intervals of respective lengths $2^{j}$, which cover the interval $[2,+\infty)$. Let $p\ge 1$ be such that $2^{s+1+p}\ge 2^{s+1}+2d+1$ for every $s\ge 1$. We define for each $s\ge 1$  sequences $(I_{j}^{(s)})_{j\ge s}$ of intervals of $\N$ by setting 
\[I_{j}^{(s)}=[2^{j}+2^{j-1}+\ldots+2^{j-s+1}, 2^{j}+2^{j-1}+\ldots+2^{j-s})\]
for every $s\ge 1$  and every $j\ge s$. All the intervals
$I_{j}^{(s)}$, $s\ge 1$, $j\ge s$, are pairwise disjoint.
We have $\#I_{j}^{(s)}=2^{j-s}$, so that $\#I_{j}^{(s)}\ge 2^{s+2+p}$ for every $j\ge 2s+p+2$. We now set, for every $j\ge 2s+p+2$
\[
L_{j}^{(s)}=\bigl \{ i\in I_{j}^{(s)}\,;\,d(i,\N\setminus I_{j}^{(s)})\ge 2^{s+1+p} \quad \textrm{and}\quad i\equiv 0\,[2^{s+1+p}]\,\bigr\} ,
\]
and define \[
\Delta _{s}=\coprod_{j\ge 2s+p+2}L_{j}^{(s)}.            
           \]
These sets $\Delta_s$ are pairwise disjoint.
We have $\min \Delta _{s}\ge 2^{2s+p+2}\ge 2^{s+1}$. Also, suppose that $i$ and $i'$ belong to $\Delta _{s}$ and $\Delta _{s'}$ respectively, with $i\neq i'$. Let $j\ge 2s+p+2$ and $j'\ge 2s+p+2$ be the unique indices such that $i\in L_{j}^{(s)}$ and $i'\in L_{j'}^{(s')}$.
\par\medskip
-- if $s=s'$, then either $j=j'$, in which case $|i-i'|\ge 2^{s+1+p}$, or $j\neq j'$, in which case $i\in I_{j}^{(s)}$ and $i'\in I_{j'}^{(s)}\subseteq \N\setminus I_{j}^{(s)}$. By the definition of $L_{j}^{(s)}$, $|i-i'|\ge 2^{s+1+p}$. In both cases, $|i-i'|\ge 2^{s+1+p}\ge 2^{s+1}+2d+1$.
\par\medskip
-- if $s\neq s'$, we have $i\in I_{j}^{(s)}$ and $i'\in I_{j'}^{(s)}\subseteq \N\setminus I_{j}^{(s)}$, so that $|i-i'|\ge 2^{s+1+p}$; also $i'\in I_{j'}^{(s')}$ and $i\in \N\setminus I_{j'}^{(s')}$, so that $|i-i'|\ge 2^{s'+1+p}$. Hence 
\[
|i-i'|\ge 2^{\max(s,s')+1+p}\ge 2^{\max(s,s')+1}+2d+1.
\]
\par\medskip
Hence the family $(\Delta _{s})_{s\ge 1}$ satisfies assumptions (i), (ii') and (iii').
In order to obtain a family satisfying additionally assumptions (iv) and (v), we consider the sets $D_{s}$ defined by setting
\[
D_{s}=\coprod_{\genfrac{}{}{0pt}{1}{j\ge 2s+p+2}{j\in J}}L_{j}^{(s)}\quad\textrm{for every }s\ge 1, \textrm{ where } J=5\N\cup(5\N+2).            
           \]
 Since $D_{s}\subseteq \Delta _{s}$ for every $s\ge 1$, the family $(D_s)_{s\ge 1}$ clearly satisfies assumptions (i), (ii') and (iii').

\par\smallskip
We will now need the following technical fact, which concerns the sums \[S(a,b)=2^{-b}\,\displaystyle\sum_{\genfrac{}{}{0pt}{1}{a<j\le b}{j\in J}}2^{j},\quad\textrm{where } 0\le a<b \textrm{ are integers}.\] 

\begin{fact}\label{Fact 0}
For every $a\ge 0$, we have
\begin{enumerate}
 \item [$\bullet$] $\sup\limits_{b>a}S(a,b)\le\dfrac{64}{31}\cdot$
 \item[$\bullet$] 
$
\lim\limits_{\genfrac{}{}{0pt}{1}{b\to+\infty}{b\in B}}S(a,b)=
\begin{cases}
36/31&\textrm{when}\ B= 5\N\\
18/31&\textrm{when}\ B= 5\N+1\\
40/31&\textrm{when}\ B= 5\N+2\\
20/31&\textrm{when}\ B= 5\N+3\\
10/31&\textrm{when}\ B= 5\N+4.
\end{cases}
$
\end{enumerate}
\end{fact}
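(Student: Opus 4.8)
The plan is to reindex the sum so that it becomes a (truncated) sum of two geometric progressions, after which both assertions become immediate. Writing
\[
S(a,b)=\sum_{\substack{a<j\le b\\ j\in J}}2^{\,j-b}
\]
and performing the change of variable $k=b-j$, the constraint $a<j\le b$ becomes $0\le k<b-a$, while $j\in J$ — that is, $j\equiv 0$ or $2\,[5]$ — becomes $k\equiv r$ or $k\equiv r-2\,[5]$, where $r=b\,[5]$. Setting $r'=(r-2)\,[5]$, so that $r\neq r'$ since $2\not\equiv 0\,[5]$, this yields
\[
S(a,b)=\sum_{\substack{0\le k<b-a\\ k\equiv r\,[5]}}2^{-k}+\sum_{\substack{0\le k<b-a\\ k\equiv r'\,[5]}}2^{-k},
\]
a sum of two truncated geometric series of common ratio $2^{-5}$.

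For the first bullet I would simply bound each of the two truncated series by its infinite completion: for any residue $c\in\{0,\dots,4\}$ one has
\[
\sum_{\substack{k\ge 0\\ k\equiv c\,[5]}}2^{-k}=\frac{2^{-c}}{1-2^{-5}}=\frac{32}{31}\,2^{-c}\le\frac{32}{31}\cdot
\]
Summing the two contributions (for $c=r$ and $c=r'$) gives $S(a,b)\le\frac{64}{31}$ for every $b>a$, uniformly in $a$; the bound is not tight (the true supremum is $40/31$, attained at $r=2$), but $64/31$ is all that is needed.

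For the second bullet, the key point is that as $b\to+\infty$ along a fixed residue class $B=5\N+r_0$ with $a$ held fixed, one has $b-a\to+\infty$, so each truncated series converges to its infinite completion, the discarded tail being $O(2^{-(b-a)})$. Since $r=r_0$ is then constant along $B$, the limit equals $\frac{32}{31}\bigl(2^{-r_0}+2^{-r_0'}\bigr)$ with $r_0'=(r_0-2)\,[5]$. Evaluating this expression for $r_0=0,1,2,3,4$ (with $r_0'=3,4,0,1,2$ respectively) produces exactly the five announced values $36/31,\,18/31,\,40/31,\,20/31,\,10/31$, which I would record in a short table matching the statement.

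The computation is entirely elementary once the reindexing is in place, so I do not anticipate a genuine obstacle; the only points demanding a little care are tracking which two residues modulo $5$ survive the substitution (note the shift by $2$ inherited from $J=5\N\cup(5\N+2)$) and verifying that the dependence on $a$ disappears in the limit, which it does precisely because the omitted tail is geometrically small.
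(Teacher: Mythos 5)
Your proof is correct and follows essentially the same route as the paper: both split $J$ into the residue classes $5\N$ and $5\N+2$, sum the resulting geometric progressions, and observe that the limit depends only on $b$ modulo $5$ while the $a$-dependent part vanishes. Your change of variables $k=b-j$ is just a cleaner bookkeeping of the paper's closed-form computation with floor functions (and your parenthetical that $40/31$ is \emph{attained} should say \emph{approached}, as the truncated sums are strictly below their completions), but the mathematical content is identical.
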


\begin{proof}
 These observations rely on the fact that 
 \[
S(a,b)=2^{-b}\cdot\dfrac{32}{31}\cdot\Bigl [\Bigl (2^{\,5\lfloor \frac{b}{5}\rfloor}-2^{\,5\lfloor \frac{a}{5}\rfloor} \Bigr) +4\Bigl (2^{\,5\lfloor \frac{b-2}{5}\rfloor}-2^{\,5\lfloor \frac{a-2}{5}\rfloor} \Bigr) \Bigr].
\]
If we denote, for every integer $c\ge 0$, by $r(c)\in\{0,1,2,3,4\}$ the residue class of $c$ modulo $5$, we have $2^{\,5\lfloor \frac{c}{5}\rfloor}=2^{\,c-r(c)}$, so that
\[
S(a,b)\sim\dfrac{32}{31}\,\Bigl ( 2^{\,-r(b)}+4\,.\,2^{\,-2-r(b-2)}\Bigr)=
\dfrac{32}{31}\,\Bigl (2^{\,-r(b)}+2^{\,-r(b-2)} \Bigr)\quad\textrm{ as } b\rightarrow +\infty. 
\]
Hence
\[
\lim\limits_{\genfrac{}{}{0pt}{1}{b\to+\infty}{b\in B}}S(a,b)=
\begin{cases}
 \dfrac{32}{31}\Bigl (1+\dfrac{1}{8} \Bigr) =\dfrac{36}{31}&\textrm{when}\ B= 5\N\\[2ex]
\dfrac{32}{31}\Bigl (\dfrac{1}{2}+\dfrac{1}{16} \Bigr) =\dfrac{18}{31}&\textrm{when}\ B= 5\N+1\\[2ex]
\dfrac{32}{31}\Bigl (\dfrac{1}{4}+1\Bigr) =\dfrac{40}{31}&\textrm{when}\ B= 5\N+2\\[2ex]
\dfrac{32}{31}\Bigl (\dfrac{1}{8}+\dfrac{1}{2} \Bigr) =\dfrac{20}{31}&\textrm{when}\ B= 5\N+3\\[2ex]
\dfrac{32}{31}\Bigl (\dfrac{1}{16}+\dfrac{1}{4} \Bigr) =\dfrac{10}{31}&\textrm{when}\ B= 5\N+4.
\end{cases}
\]
\end{proof}
For every $n\ge 2$, let $j_{n}=\lfloor\log_{2}n\rfloor$: $2^{j_n}\le n<2^{j_n+1}$, and $j_n$ is the unique integer $j\ge 1$ such that $n$ belongs to $I_{j}$. We have 
\begin{equation}\label{eq4}
 \#\, D_{s}\cap[1,2^{j_{n}})\le\#\, D_{s}\cap[1,n)\le\#\, D_{s}\cap[1,2^{j_{n}+1})
 \end{equation}
 and
 \[
 \#\,D_{s}\cap[1,2^{m})=\#\coprod_{\genfrac{}{}{0pt}{1}{2s+p+2\le j<m}{j\in J}}L_{j}^{(s)}=
 \sum_{\genfrac{}{}{0pt}{1}{2s+p+2\le j<m}{j\in J}}\#\,L_{j}^{(s)} \quad\textrm{ for every }m\ge 1.
 \]
 Now
$
 2^{\,-(s+1+p)}\#\,I_{j}^{(s)}-2\le\#\, L_{j}^{(s)}\le 2^{\,-(s+1+p)}\#\,I_{j}^{(s)},$ i.e.
\[2^{\,j-2s-p-1}-2\le\#\,L_{j}^{(s)}\le 2^{\,j-2s-p-1}.\] 
Hence
\begin{align*}
\#\,D_{s}\cap[1,2^{m})&\ge\sum _{\genfrac{}{}{0pt}{1}{2s+p+2\le j<m}{j\in J}}\bigl (2^{\,j-2s-p-1} -2\bigr)\\&=2^{\,-2s-p-1}\,2^{\,m-1}S(2s+p+3,m-1)-2(m-2s-p-2).
\end{align*}
It follows from Fact \ref{Fact 0} that 
\[
\liminf_{m\to+\infty}\,2^{-m}\,\#\,D_{s}\cap [1,2^{m})\ge \dfrac{10}{31}\cdot 2^{-2s-p-2}
\]
so that by (\ref{eq4}) $\di\, D_{s}\,\ge\dfrac{10}{31}\cdot 2^{-2s-p-3}$ for every $s\ge 1$. Each set $D_{s}$ has positive lower density, and assumption (iv) holds true. Also, Fact \ref{Fact 0} implies that 
\[
2^{\,-m}\,\# \,D_{s}\cap[1,2^{m}]\le\dfrac{64}{31}\cdot 2^{\,-2s-p-2}\quad \textrm{for every }m\ge 1,
\]
 so that 
\[
\dfrac{1}{n}\,\#\,D_{s}\cap[1,n]\le \dfrac{64}{31}\cdot 2^{\,-2s-p-1}\quad \textrm{for every }n\ge 1.
\] Assumption (v-b) is hence satisfied, whatever the choice of the sequence $(N_{l})_{l\ge 1}$.
\par\medskip
In order to define a suitable sequence $(N_{l})_{l\ge 1}$, we enumerate the set
$J\cap[p+4,+\infty)$ as an increasing sequence $(q_{l})_{l\ge 1}$, and set $N_{l}=2^{\,q_{l}+1}$ for every $l\ge 1$.
Since the distance between two elements of $J$ is always at least $2$, $q_l+1$ never belongs to $J$.
In order to estimate, for every $s\ge 1$ and every $l\ge 1$, the distance $d(N_{l},D_{s})$ of $N_l$ to $D_s$, we consider two cases:
\par\smallskip
-- if $q_l\ge \min J\cap [2s+p+2,+\infty)$, 
\begin{eqnarray*}
d(N_{l},D_{s})&\ge&\min\bigl (2^{\,q_l+2}-2^{\,q_l+1},2^{\,q_l+1}-(2^{\,q_l}+2^{\,q_l-1}+\ldots+2^{\,q_l-s}) \bigr)\\ 
&\ge& \min\bigl (2^{\,q_l+1}, 2^{\,q_l-s}\bigr)\ge 2^{\,s+p+2}\ge 2^{\,s}+d; 
\end{eqnarray*}
\par\smallskip
-- if $q_l< \min J\cap [2s+p+2,+\infty)$,  then $q_l\le \min J\cap [2s+p+2,+\infty)-2$, and
\begin{eqnarray*}
d(N_{l},D_{s})&=& 2^{\,\min J\cap [2s+p+2,+\infty)}-2^{\,q_l+1} \ge 2^{\,2s+p+1}\ge 2^{\,s}+d.
\end{eqnarray*}
Hence assumption (v-a) is satisfied.
Also,
\begin{align*}
\#\,D_{s}\cap[1,N_{l}]
=\sum_{\genfrac{}{}{0pt}{1}{2s+p+2\le j\le q_{l}}{j\in J}}\#\,L_{j}^{(s)}
\end{align*}
so that $\dfrac{1}{N_{l}}\,\#\, D_{s}\cap[1,N_{l}]\sim 2^{\,-2s-p-2}\,S( 2s+p+3,q_{l} )$ as $l\rightarrow +\infty$. It follows from Fact \ref{Fact 0} that for every $s\ge 1$,
\begin{align*}
\lim_{\genfrac{}{}{0pt}{1}{l\to+\infty}{q_{l}\in 5\N}}\dfrac{1}{N_{l}}\,\#\, D_{s}\cap[1,N_{l}]&=2^{\,-2s-p-2}\cdot\dfrac{36}{31}
\intertext{and}
\lim_{\genfrac{}{}{0pt}{1}{l\to+\infty}{q_{l}\in 5\N+2}}\dfrac{1}{N_{l}}\,\#\, D_{s}\cap[1,N_{l}]&=2^{\,-2s-p-2}\cdot\dfrac{40}{31}\cdot
\end{align*}
These two limits being distinct, (v-c) is satisfied. So the family of sets $(D_{s})_{s\ge 1}$ satisfies (i), (ii'), (iii'), (iv) and (v), and Theorem \ref{Theorem 1} follows from Lemmas \ref{Lemma 1} and \ref{Lemma 2}.

\section{Further remarks and open questions}\label{Section 3}
The \op s satisfying the assumptions of Theorem \ref{Theorem 1} admit a \fhy\ vector $x\in X$ such that $\nt{x}{U}$ has no density for a particular choice of the set $U$. A natural question is whether it is possible to enlarge the class of open sets $U$ for which this irregular behavior holds. Observe that it cannot hold for \emph{all} non-empty open sets: if $U$ is any \t-invariant dense open subset of $X$, $\nt{x}{U}$ is a cofinite set for any hypercyclic vector $x$, and thus $\dens\nt{x}{U}=1$. But it makes sense to ask:
\begin{question}\label{Question 2}
 Does there exist a bounded \op\ \t\ on a separable Banach space $X$ which admits a \fhy\ vector $x\in X$ such that $\nt{x}{U}$ has no density for every bounded non-empty open subset $U\subseteq X$? Or any non-empty open ball $U$? Or any other natural class of non-empty open subsets $U$ of $X$ (for instance, the class considered in \cite{Puig} of non-empty open sets $U$ with the property that none of the sets $\bigcup_{n=0}^N T^{-n}U$, $N\ge 0$, is  dense in $X$)? Do \op s satisfying the assumptions of Theorem \ref{Theorem 1} admit \fhy\ vectors with an irregularly visiting orbit in one of these strong senses?
\end{question}
It seems likely that the existence of \fhy\ vectors with an irregularly visiting orbit is a much more general phenomenon that what is seen here, and that actually \emph{all} \fhy\ \op s admit such a vector.
\begin{question}\label{Question 3}
 Does every \fhy\ \op\ $T\in\mathcal{B}(X)$ admit a \fhy\ vector with an irregularly visiting orbit?
\end{question}
Observe that if $T\in\mathcal{B}(X)$ is an ergodic \op\ with respect to a \pr\ measure $m$ on $X$ with full support, and $(U_{p})_{p\ge 1}$ is any collection of non-empty open sets in $X$, $\dens\nt{x}{U_{p}}=m(U_{p})$ for $m$-a.e.\  $x\in X$. If $(U_{p})_{p\ge 1}$ is a countable basis of open sets in $X$ which is stable by taking finite unions, such vectors $x$ are easily seen to be such that $\dens\nt{x}{U}=m(U)$ for every non-empty open set $U\subseteq X$ such that $m(\partial U)=0$, where $\partial U$ denotes the boundary $\ba{U}\setminus U$ of $U$. Whether this property can hold for \emph{all} non-empty open sets $U$ does not seem clear at all.
\par\medskip
Lastly, we recall a question from \cite{Sh}, already mentioned in the introduction:
\begin{question}\label{Question 5}
 Does there exist an \op\ $T\in\bx$ admitting a \fhy\ vector $x\in X$ such that, for some non-empty open subset $U\subseteq X$, $\nt{x}{U}$ contains no subset $A$ of $\N$ admitting a positive density?
 \end{question}

\end{document}